\documentclass[12pt]{amsart}
\setcounter{secnumdepth}{5}
\usepackage[T1]{fontenc}
\usepackage[latin1]{inputenc}
\usepackage{typearea}
\usepackage{geometry}
\usepackage{ulem}
\usepackage{amsmath}
\usepackage{amssymb}
\usepackage{latexsym}
\usepackage{enumerate}
\usepackage{amsthm}
\usepackage[all]{xy}
\usepackage{hhline}
\usepackage{epsf} 
\usepackage{cite}

\newtheorem{theorem}{{\sc Theorem}}[section]
\newtheorem{cor}[theorem]{{\sc Corollary}}

\newtheorem{lemma}[theorem]{{\sc Lemma}}
\newtheorem{prop}[theorem]{{\sc Proposition}}

\theoremstyle{remark}
\newtheorem{remark}[theorem]{{\sc Remark}}

\theoremstyle{definition}

%
%
\newcommand{\R}{\mathbb{R} }

\newcommand{\B}{\mathcal{B}}
\newcommand{\F}{\mathcal{F}}

\newcommand{\D}{\mathcal{D}}
\newcommand{\K}{\mathcal{K}}
\newcommand{\Z}{\mathbb{Z}}

\newcommand{\calL}{\mathcal{L}}

\newcommand{\calw}{\mathcal{W}}

\newcommand{\ga}{\gamma}

\providecommand{\abs}[1]{\lvert #1\rvert}

\providecommand{\fnorm}[1]{\lVert #1\rVert_\infty}

\DeclareMathOperator{\Lip}{Lip}

%
%

\renewcommand{\phi}{\varphi}
\renewcommand{\epsilon}{\varepsilon}
\newcommand{\eps}{\varepsilon}
\renewcommand{\rho}{\varrho}

\begin{document}
\title[Rate of convergence for the arcsine law ]{A rate of convergence for the arcsine law by Stein's method}
\author{Christian D\"obler}
\thanks{The author has been supported by Deutsche Forschungsgemeinschaft via SFB-TR 12.\\
Ruhr-Universit\"at Bochum, Fakult\"at f\"ur Mathematik, NA 3/68, D-44780 Bochum, Germany. \\
christian.doebler@ruhr-uni-bochum.de\\
{\it Keywords:} Stein's method, arcsine law, symmetric random walk, Chung-Feller Theorem }
\begin{abstract}
Using Stein's method for the Beta distributions and a recent technique by Goldstein and Reinert of comparing the Stein characterization of the target distribution with that of the approximating distribution we prove a rate of convergence in the classical arcsine law, which states that the distribution of the relative time spent positive by a symmetric random walk on $\Z$ converges weakly to the arcsine distribution on $[0,1]$. 
\end{abstract}

\maketitle
\section{Introduction}
Consider a game between two players $A$ and $B$ that consists of consecutive tossings of a fair coin. Each time the coin shows heads, player $A$ has to pay one dollar to player $B$ and conversely, each time the coin shows tails, player $A$ obtains one dollar from player $B$. If we consider the process that gives for each discrete time $n$ the current fortune of player $A$, then by the symmetry of the model one is led to the conjecture, that for $n$ sufficiently large, the relative amount of time, that player $A$ is in the lead should be roughly one-half. The so-called (first) arcsine law states, that this intuition is entirely wrong. In fact, it is more likely that one of the players will lead for nearly all of the time.\\ 
Let $(S_k)_{k\geq0}$ be the symmetric random walk on $\Z$, i.e. we have\\ $S_k:=\sum_{j=1}^k\eps_j$ ($k\geq0$) for i.i.d. random variables $\eps_1,\eps_2,\ldots$ with $P(\eps_1=1)=P(\eps_1=-1)=\frac{1}{2}$. Letting $X_j:=1_{\{S_{j-1}\geq0,\,S_j\geq0\}}$, 
$T_m:=\sum_{j=1}^{2m} X_j$, $R_m:=\frac{1}{2}T_m$ and $W_m:=\frac{1}{2m}T_m=\frac{1}{m}R_m$ it is a classical result, first proven by Paul L\'{e}vy for Brownian motion, that as $m\to\infty$ we have 

\[\calL(W_m)\stackrel{\D}{\rightarrow}\nu_{\frac{1}{2},\frac{1}{2}}\,,\] 

where, for $a,b>0$ $\nu_{a,b}$ denotes the \textit{Beta distribution} to the parameters $a$ and $b$ on $[0,1]$, which has density $q_{a,b}(x):=\frac{1}{B(a,b)}x^{a-1}(1-x)^{b-1}1_{(0,1)}(x)$ with $B(a,b)$ denoting the \textit{Beta function}. Consequently, $\nu:=\nu_{\frac{1}{2},\frac{1}{2}}$ is the arcsine distribution on $[0,1]$ with density  $q(x):=q_{\frac{1}{2},\frac{1}{2}}(x)=\frac{1}{\pi}\frac{1}{\sqrt{x(1-x)}}1_{(0,1)}(x)$. A proof of this well-known theorem can be found for example in \cite{Fel1}.\\
 Recently, there has been some progress in Stein's method for the family of Beta distributions by Goldstein and Reinert (see \cite{GolRei12}) and by the author of the present article (see \cite{Doe12}). Both of these preprints prove rates of convergence in a Polya urn model. In this paper we will use the general results from the preprints \cite{GolRei12} and \cite{Doe12} and especially the technique of comparing the Stein characterization of the target distribution with that of the (discrete) approximating distribution from \cite{GolRei12} to prove bounds on the Wasserstein distance of the distribution $\calL(W_m)$ of $W_m$ to the arcsine distribution $\nu$.

\section{Stein's method for the arcsine distribution and for $\calL(R_m)$}
Stein's method is very useful tool for proving distributional convergence. Its main advantage over other techniques is, that it automatically yields concrete error bounds on various distributional distances. Since its introduction in 1972 in the seminal paper \cite{St72} by Charles Stein for the univariate standard normal distribution, there has been much progress in adapting Stein's idea of linking a characterizing operator for the target distribution to a differential equation (in the absolutely continuous case) or to a difference equation (in the discrete case), the \textit{Stein equation}, to other distributions, as for example the Poisson distribution (see \cite{Ch75}), the Gamma distribution (see \cite{Luk}), the exponential distribution (see \cite{CFR11} and \cite{PekRol11}), the geometric distribution (see \cite{PRR}) and many others. For a general introduction to Stein's method we refer to the book \cite{CGS} which emphasizes normal approximation but also treats approximation by other distributions. Here, we will make use of the recent development of Stein's method for the Beta distributions (specialized to the arcsine distribution), which was done independently and with different emphases in \cite{GolRei12} and \cite{Doe12}. Furthermore, we will heavily use the approach from \cite{GolRei12} for finding various Stein characterizations of a distribution supported on $\Z$. This topic was also explored in \cite{LeSwd}.\\
We start with Stein's method for the arcsine distribution $\nu$. The following result, a slight variant of Proposition 3.1 in \cite{Doe12}, gives a Stein characterization for $\nu$. We denote by $\K$ the class of all continuous and piecewise continuously differentiable functions $f:\R\rightarrow\R$ vanishing at infinity with $\int_\R\abs{f'(x)}\sqrt{x(1-x)}dx<\infty$.

\begin{prop}\label{chararcsin}
A real-valued random variable $X$ has the arcsine distribution $\nu$ on $[0,1]$ if and only if for all functions $f\in\K$ the expected values $E\bigl[X(1-X)f'(X)\bigr]$ and $E\bigl[(X-\frac{1}{2})f(X)\bigr]$ exist and coincide. 
\end{prop}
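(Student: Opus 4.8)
The plan is to prove the two implications of the equivalence separately. \emph{Necessity.} Assume $X\sim\nu$. Since $f\in\K$ is continuous and vanishes at infinity it is bounded on $[0,1]$, so $E\bigl[\abs{X-\frac12}\,\abs{f(X)}\bigr]\le\frac12\fnorm f<\infty$, while $E\bigl[X(1-X)\abs{f'(X)}\bigr]=\frac1\pi\int_0^1\abs{f'(x)}\sqrt{x(1-x)}\,dx<\infty$ by the definition of $\K$; hence both expectations exist. I would then integrate by parts against the weight $g(x):=x(1-x)q(x)=\frac1\pi\sqrt{x(1-x)}$, using that $g$ is $C^1$ on $(0,1)$, extends continuously to $[0,1]$ with $g(0)=g(1)=0$, and that $g'(x)=-(x-\frac12)q(x)$ belongs to $L^1(0,1)$ (it is $O(x^{-1/2})$ near $0$ and $O((1-x)^{-1/2})$ near $1$). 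Then $fg$ is absolutely continuous on $[0,1]$ with $(fg)'=f'g+fg'$ a.e.\ and both summands in $L^1(0,1)$, so $\int_0^1(f'g+fg')\,dx=(fg)(1)-(fg)(0)=0$; rewriting this as $\int_0^1 f'g\,dx=-\int_0^1 fg'\,dx$ and resubstituting $g=x(1-x)q$ and $g'=-(x-\frac12)q$ gives $E[X(1-X)f'(X)]=E[(X-\frac12)f(X)]$.

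\emph{Sufficiency.} Assume the stated identity for all $f\in\K$. The first step --- easy to overlook --- is to show $P(0\le X\le1)=1$. Fix $1<a<c$, set $\psi(y):=\sqrt{y(y-1)}$ on $(1,\infty)$, choose $0\le\rho\in C_c^\infty\bigl((a,c)\bigr)$, and define $f(y):=-\psi(y)^{-1}\int_1^y\rho(x)\psi(x)^{-1}\,dx$ for $y>1$ and $f\equiv0$ on $(-\infty,1]$. From $\psi'=(y-\frac12)/\psi$ and $y(1-y)=-\psi^2$ one checks directly that $y(1-y)f'(y)-(y-\frac12)f(y)=\rho(y)$ on $(1,\infty)$, while $f$ is smooth, of order $O(1/y)$ at infinity (hence vanishing there) and identically $0$ on $[0,1]$, so $f\in\K$ (the integral in the definition of $\K$ only involves $[0,1]$, where $f'\equiv0$). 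The hypothesis then forces $E\bigl[\rho(X)\,1_{\{X>1\}}\bigr]=0$, and letting $\rho$ range over nonnegative bumps in $(1,\infty)$ gives $P(X>1)=0$. The involution $f\mapsto-f(1-\,\cdot\,)$ maps $\K$ into itself and preserves the identity, so $1-X$ satisfies the same characterization; applying the previous conclusion to $1-X$ yields $P(X<0)=0$. Thus $X$ is $[0,1]$-valued almost surely.

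Now I would run Stein's method for $\nu$ with the operator $(\A f)(x):=x(1-x)f'(x)-(x-\frac12)f(x)$. For $h\in C[0,1]$ write $\nu(h):=\int_0^1 hq$ and define $f_h(y):=g(y)^{-1}\int_0^y q(x)\bigl(h(x)-\nu(h)\bigr)\,dx$ for $y\in(0,1)$; since $\int_0^1 q\,(h-\nu(h))=0$, the numerator is $O(\sqrt y)$ near $0$ and $O(\sqrt{1-y})$ near $1$, so $f_h$ is bounded and extends continuously to $[0,1]$ with $f_h(0)=2\bigl(h(0)-\nu(h)\bigr)$ and $f_h(1)=2\bigl(\nu(h)-h(1)\bigr)$ (l'Hospital). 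A direct computation shows $\A f_h=h-\nu(h)$ on all of $[0,1]$ --- these endpoint values of $f_h$ being exactly what makes the degenerate equation hold at $0$ and $1$ --- and that $x(1-x)f_h'(x)=h(x)-\nu(h)+(x-\frac12)f_h(x)$ extends continuously to $[0,1]$; moreover, differentiating $y\mapsto g(y)f_h(y)=\int_0^y q(h-\nu(h))$ gives $g'f_h+gf_h'=q(h-\nu(h))$, whence $\int_0^1\abs{f_h'(x)}\sqrt{x(1-x)}\,dx\le2\pi\fnorm h+\pi\fnorm{f_h}\int_0^1\abs{g'(x)}\,dx<\infty$. Extending $f_h$ to $\R$ by any continuous, piecewise $C^1$, compactly supported continuation puts $f_h\in\K$, so the hypothesis together with $X$ being $[0,1]$-valued gives $0=E[X(1-X)f_h'(X)]-E[(X-\frac12)f_h(X)]=E[h(X)]-\nu(h)$ for every $h\in C[0,1]$, and hence $\calL(X)=\nu$. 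The part I expect to be the real work is the analysis at the degenerate endpoints $0$ and $1$ --- showing $f_h$ and $x(1-x)f_h'(x)$ stay bounded up to the boundary and that $f_h$ still meets the integrability condition defining $\K$ --- together with, in the sufficiency direction, realizing that one must separately exclude mass of $X$ outside $[0,1]$ and producing the explicit sub-solution of $\A$ on $(1,\infty)$ that does it.
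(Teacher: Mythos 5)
Your proof is correct, and it is worth noting that the paper itself does not prove Proposition \ref{chararcsin} at all: it is stated as ``a slight variant of Proposition 3.1 in \cite{Doe12}'' and the burden is entirely deferred to that reference. So you have supplied a self-contained argument where the paper offers only a citation. Your two directions are the standard ones for density-approach characterizations (and presumably mirror what \cite{Doe12} does): integration by parts against $g=x(1-x)q$ for necessity, with the correct observation that $g'=-(x-\tfrac12)q\in L^1(0,1)$ and $g(0)=g(1)=0$ kill the boundary terms; and for sufficiency the Stein solution $f_h$, whose endpoint values $f_h(0)=2(h(0)-\nu(h))$, $f_h(1)=2(\nu(h)-h(1))$ you compute correctly and which make the degenerate equation hold even on the atoms $\{0\}$ and $\{1\}$. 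The step most often omitted in such arguments --- ruling out mass outside $[0,1]$ --- you handle genuinely and correctly: the explicit solution $f=-\psi^{-1}\int_1^{\cdot}\rho\psi^{-1}$ on $(1,\infty)$ does satisfy $y(1-y)f'-(y-\tfrac12)f=\rho$ (since $\psi'=(y-\tfrac12)/\psi$ and $y(1-y)=-\psi^2$), lies in $\K$, and the reflection $f\mapsto -f(1-\cdot)$ transfers the conclusion to $P(X<0)=0$. Two small refinements would tighten the write-up: (i) in the sufficiency step it is cleaner to let $h$ range only over Lipschitz functions on $[0,1]$ (still a measure-determining class), since for merely continuous $h$ the derivative $f_h'$ need not stay bounded near $0$ and $1$ and one would have to argue about whether $f_h$ is ``piecewise continuously differentiable'' in the sense intended by $\K$; for Lipschitz $h$ this is settled by the bound $\fnorm{f_h'}<\infty$ of Lemma \ref{bounds}(b); (ii) the integral $\int_\R\abs{f'(x)}\sqrt{x(1-x)}\,dx$ in the definition of $\K$ is only meaningful over $[0,1]$, which you implicitly (and reasonably) assume. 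Neither point affects the validity of the argument.
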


According to Stein's idea, by Proposition \ref{chararcsin}, for a given $\nu$-integrable test function $h:\R\rightarrow\R$ one is led to consider the Stein equation 

\begin{equation}\label{steineqarcsin}
x(1-x)f'(x)+\Bigl(\frac{1}{2}-x\Bigr)f(x)=h(x)-\nu(h)\,,
\end{equation}  

which is to be solved for the unknown function $f$ of $x\in\R$ (or, at least, $x\in[0,1]$).
From the theory in Section 3 of \cite{Doe12} we know that there exists a unique solution $f_h$ to (\ref{steineqarcsin}), defined on $\R$, which is bounded on $[0,1]$. For $x\in(0,1)$ it is given by 

\begin{equation}\label{steinsolarcsin}
f_h(x)=\frac{1}{x(1-x)q(x)}\int_0^x\bigl(h(t)-\nu(h)\bigr)q(t)dt=\frac{-1}{x(1-x)q(x)}\int_x^1\bigl(h(t)-\nu(h)\bigr)q(t)dt\,.
\end{equation} 

Furthermore, $f_h$ is continuous at $0$ and $1$ as long as $h$ is.
The following result is a special case of Proposition 3.7 in \cite{Doe12}.

\begin{lemma}\label{bounds}
Let $h:\R\rightarrow\R$ be Borel-measurable and $\nu$-integrable.
\begin{enumerate}[{\normalfont (a)}]
\item If $h$ is bounded, then $\fnorm{f_h}\leq2\fnorm{h-\nu(h)}$.
\item If $h$ is Lipschitz, then $\fnorm{f_h}\leq2\fnorm{h'}$ and $\fnorm{f_h'}\leq C_1\fnorm{h}$, where the finite constant $C_1$ does not depend on $h$.
\item If $h$ is twice differentiable with bounded first and second derivative, then \\
$\fnorm{f_h''}\leq C_2\bigl(\fnorm{h'}+\fnorm{h''}\bigr)$, where the finite constant $C_2$ does not depend on $h$.  
\end{enumerate}
\end{lemma}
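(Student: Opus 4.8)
The three estimates are the specialization to $a=b=\tfrac12$ of Proposition 3.7 in \cite{Doe12}, and I will only indicate how the argument runs. Since the Stein equation (\ref{steineqarcsin}) depends on $h$ only through $h-\nu(h)$, I would set $g:=h-\nu(h)$, so that $f_h=f_g$ and $\nu(g)=0$, and work throughout with the explicit formula (\ref{steinsolarcsin}),
\[
f_h(x)=\frac{1}{x(1-x)q(x)}\int_0^x g(t)q(t)\,dt=\frac{-1}{x(1-x)q(x)}\int_x^1 g(t)q(t)\,dt\qquad(x\in(0,1)),
\]
together with the fact recorded just before the statement that $f_h$ extends boundedly, and continuously when $h$ is, to $[0,1]$. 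Both $x(1-x)q(x)=\tfrac1\pi\sqrt{x(1-x)}$ and $\int_0^x q(t)\,dt=\tfrac2\pi\arcsin\sqrt x$ are elementary, so each part reduces to estimating $f_h$, and then $f_h'$ and $f_h''$, directly from this representation and from (\ref{steineqarcsin}) itself.

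For (a) I would bound $|g|$ by $\fnorm g$ in the first representation on $\bigl(0,\tfrac12\bigr]$ and in the second on $\bigl[\tfrac12,1\bigr)$, which reduces the task to checking that
\[
\frac{1}{x(1-x)q(x)}\int_0^x q(t)\,dt=\frac{2\arcsin\sqrt x}{\sqrt{x(1-x)}}
\]
remains bounded on $\bigl(0,\tfrac12\bigr]$ — the substitution $x=\sin^2\theta$ rewrites it as $4\theta/\sin 2\theta$, which is increasing — and symmetrically for the mirror expression on $\bigl[\tfrac12,1\bigr)$; optimizing this explicit function of $x$ then gives a bound of the form claimed in (a). For the bound on $\fnorm{f_h}$ in (b), note that a continuous $g$ with $\nu(g)=0$ must vanish somewhere in $(0,1)$ because $q>0$ there; hence a Lipschitz $h$ satisfies $\fnorm{h-\nu(h)}\le\fnorm{h'}$, and combining this with (a) yields the bound in terms of $\fnorm{h'}$.

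For the derivative bounds I would differentiate the identity $x(1-x)q(x)f_h(x)=\int_0^x g(t)q(t)\,dt$ — equivalently, solve (\ref{steineqarcsin}) for $f_h'$ — to obtain
\[
f_h'(x)=\frac{g(x)-\bigl(\tfrac12-x\bigr)f_h(x)}{x(1-x)}=:\frac{N(x)}{x(1-x)}\,,
\]
and, after one further differentiation, an analogous formula for $f_h''(x)$ assembled from $g,g',f_h,f_h',N$ and $N'$. On any closed subinterval of $(0,1)$ these are controlled at once by the bounds on $\fnorm{f_h}$, and on $\fnorm{f_h'}$, already obtained, so the whole difficulty is concentrated at the endpoints $0$ and $1$, where $x(1-x)$ vanishes. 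Evaluating (\ref{steineqarcsin}) at $x=0$ and at $x=1$ determines the endpoint values of $f_h$ (which exist since $h$ is continuous) in terms of $g(0)$ and $g(1)$ and shows that $N(0)=N(1)=0$; a first-order Taylor expansion of $g$ and of $f_h$ at the endpoints, using that $h$ is Lipschitz, then shows that $N(x)$ vanishes to first order there, which compensates the simple zero of $x(1-x)$ and gives the bound on $\fnorm{f_h'}$, while a second-order expansion, using that $h$ is twice differentiable, does the same for the numerator occurring in $f_h''$ and gives the bound on $\fnorm{f_h''}$; in each case the implied constants are controlled by the norms of $h$ appearing in the respective assertion.

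This endpoint analysis — verifying the vanishing, identifying its precise order, and tracking how the constants depend on $h$ — is the only genuinely delicate point of the proof; away from $\{0,1\}$ everything is routine. (Alternatively one can bypass (\ref{steinsolarcsin}) and argue instead through the Jacobi-type diffusion on $[0,1]$ with generator $Lf(x)=x(1-x)f''(x)+\bigl(\tfrac12-x\bigr)f'(x)$, which has $\nu$ as its invariant law; but the route through (\ref{steinsolarcsin}) is the more economical one here.)
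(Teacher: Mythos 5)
The paper's own ``proof'' of this lemma is a single sentence: it reduces everything to Proposition 3.7 of \cite{Doe12} with $a=b=\tfrac12$, using only that $\tfrac12$ is the median of $\nu$ and $q(1/2)=2/\pi$. Your opening sentence therefore already reproduces all the paper actually does; the remainder of your text is a reconstruction of the proof of the cited result, and it is there that two points need attention. First, in (a) your own computation does not deliver the stated constant: $4\theta/\sin(2\theta)$ increases from $2$ to $\pi$ on $(0,\pi/4]$, so optimizing gives $\fnorm{f_h}\leq\pi\fnorm{h-\nu(h)}$, not $2\fnorm{h-\nu(h)}$. In fact $\pi$ is sharp --- for $h=\mathrm{sgn}(\cdot-\tfrac12)$ one gets $f_h(1/2)=2\pi\int_0^{1/2}h\,q\,dt=-\pi$ while $\fnorm{h-\nu(h)}=1$ --- so the constant $2$ in the statement cannot be recovered by any argument; this is harmless for Theorem \ref{maintheo}, where only the order in $m$ matters, but your hedge ``a bound of the form claimed'' papers over the fact that your method proves a (correct) weaker inequality than the one asserted.

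Second, and more substantially, your treatment of the derivative bounds is circular at exactly the point you identify as delicate. To show that $N(x)=g(x)-(\tfrac12-x)f_h(x)$ vanishes to first order at $0$ and $1$ you invoke a ``first-order Taylor expansion of $f_h$ at the endpoints''; but the existence of such an expansion with controlled coefficients is precisely the bound on $\fnorm{f_h'}$ that is being proved (and likewise, one order higher, for $f_h''$). The non-circular route is to substitute the integral representation (\ref{steinsolarcsin}) into $N$, so that $f_h'(x)=g(x)/(x(1-x))-(\tfrac12-x)\bigl(\int_0^xgq\,dt\bigr)/\bigl(x^2(1-x)^2q(x)\bigr)$ is an explicit functional of $g$ alone, and then to check that the two $1/x$ singularities cancel: writing $g(t)=g(0)+O(\fnorm{h'}t)$ and using $\int_0^xq\,dt\sim\tfrac2\pi\sqrt x$ and $x(1-x)q(x)\sim\sqrt x/\pi$ near $0$, the leading terms $g(0)/x$ from each summand annihilate one another, leaving a quantity bounded by a multiple of $\fnorm{h'}$ (note also that this naturally produces $C_1\fnorm{h'}$ rather than the $C_1\fnorm{h}$ printed in (b), consistent with how the lemma is actually applied in (\ref{e1}) and (\ref{e4})). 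The same cancellation must be verified at $1$ and then repeated for the second derivative. As written, your proposal names this difficulty but does not contain the step that resolves it.
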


\begin{proof}
Noting that $\frac{1}{2}$ is the median for $\nu$ with $q(1/2)=\frac{2}{\pi}$, this follows immediately from Proposition 3.7 in \cite{Doe12} with $a=b=\frac{1}{2}$.
\end{proof}

\begin{remark}
Note that the bounds on the Stein solutions $f_h$ from Lemmas 3.3-3.5 in \cite{GolRei12} do not cover the case of the arcsine distribution, since they all impose the condition $a,b\in[1,\infty)$. It is the restriction to these parameters, that allows the authors to provide explicit constants in place of $C_1$ from Lemma \ref{bounds} which furthermore yield explicit constants for the rate of convergence in the Polya urn model. 
\end{remark}

Now, we turn to a suitable version of Stein's method for the distribution $\calL(R_m)$ of $R_m$. To do so, we will first give an explicit formula for the probability mass function $p(k)$ of $R_m$, which is given by a famous Theorem by Chung and Feller.
First, however, we will need the following lemma.

\begin{lemma}\label{lemxj}
Let  $m$ be a positive integer. Then 
\begin{enumerate}[{\normalfont (a)}]
 \item $X_{2j-1}=X_{2j}$ for $j=1,\ldots,m$
 \item $T_{m}$ has values in $2\cdot\{0,\ldots,m\}$ and hence $R_m$ has values in $\{0,\ldots,m\}$.
 \item Letting $\tilde{X}_j:=1-X_j$ we have $\tilde{X}_j=1_{\{S_{j-1}\leq0,\,S_j\leq0\}}$ and\\ $(\tilde{X}_1,\ldots,\tilde{X}_n)\stackrel{\D}{=}(X_1,\ldots,X_n)$.
\end{enumerate}
\end{lemma}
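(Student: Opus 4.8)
The plan is to verify the three assertions directly from the definitions $X_j = 1_{\{S_{j-1}\geq 0,\, S_j\geq 0\}}$ and $S_k = \sum_{i=1}^k \eps_i$, using only elementary parity considerations and the sign-flip symmetry of the $\eps_i$.

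For part (a), I would fix $j\in\{1,\dots,m\}$ and argue that $X_{2j-1}=X_{2j}$ by analysing the possible signs of $S_{2j-2}, S_{2j-1}, S_{2j}$. The key observation is the parity fact that $S_{2j-2}$ and $S_{2j}$ are even while $S_{2j-1}$ is odd, so in particular $S_{2j-1}\neq 0$ and $S_{2j-1}$ differs from each of $S_{2j-2}$ and $S_{2j}$ by exactly one. I would then split into the cases $S_{2j-2}\geq 0$ and $S_{2j-2}\leq -2$ (no other possibility, again by parity). If $S_{2j-2}\geq 0$, one checks that the event $\{S_{2j-1}\geq 0,\, S_{2j}\geq 0\}$ coincides with $\{S_{2j-1}\geq 0\}$ (since $S_{2j-2}\ge 0$ and $S_{2j-1}$ odd force $S_{2j-1}\ge -1$, hence $S_{2j-1}\ge 1$ once we also know it is $\ge 0$... more carefully: if $S_{2j-1}\ge 0$ then $S_{2j-1}\ge 1$, so $S_{2j}\ge 0$ automatically; and if $S_{2j-2}\ge 0$ then whenever $S_{2j}\ge 0$ we have $S_{2j}\ge 0$ hence $S_{2j-1}\ge -1$, but combined with $S_{2j-2}\ge 0$ one gets $S_{2j-1}\ge 1$). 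A symmetric argument handles $S_{2j-2}\le -2$, where both $X_{2j-1}$ and $X_{2j}$ vanish. Part (b) is then an immediate consequence: by (a), $T_m = \sum_{j=1}^{2m} X_j = 2\sum_{j=1}^m X_{2j}$, so $T_m\in 2\{0,\dots,m\}$ and $R_m = \tfrac12 T_m \in\{0,\dots,m\}$.

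For part (c), the identity $\tilde X_j = 1 - X_j = 1_{\{S_{j-1}\le 0,\, S_j\le 0\}}$ requires showing that $1 - 1_{\{S_{j-1}\ge 0,\, S_j\ge 0\}} = 1_{\{S_{j-1}\le 0,\, S_j\le 0\}}$, i.e. that exactly one of the two events $\{S_{j-1}\ge 0,\, S_j\ge 0\}$ and $\{S_{j-1}\le 0,\, S_j\le 0\}$ occurs. These two events are disjoint only up to the boundary where $S_{j-1}=S_j=0$, which is impossible since consecutive partial sums differ by $\pm 1$; and their union is everything precisely because $S_{j-1}$ and $S_j$ have opposite parity, so they cannot straddle $0$ with opposite strict signs — if $S_{j-1}>0$ and $S_j<0$ (or vice versa) then $|S_{j-1}-S_j|\ge 2$, contradicting $|S_{j-1}-S_j|=1$. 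Hence one and only one of the two events holds, giving the pointwise identity. The distributional equality $(\tilde X_1,\dots,\tilde X_n)\stackrel{\D}{=}(X_1,\dots,X_n)$ then follows from the sign-flip symmetry: replacing each $\eps_i$ by $-\eps_i$ leaves the law of $(\eps_1,\dots,\eps_n)$ unchanged and sends $S_k$ to $-S_k$, hence sends $X_j = 1_{\{S_{j-1}\ge 0,\,S_j\ge 0\}}$ to $1_{\{-S_{j-1}\ge 0,\,-S_j\ge 0\}} = 1_{\{S_{j-1}\le 0,\,S_j\le 0\}} = \tilde X_j$. Writing this as an equality in distribution of the two random vectors completes the proof.

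The only genuinely delicate point is the careful case analysis in part (a); everything else is a matter of bookkeeping with parities and the $\pm 1$ increments. I would take care to state the parity facts ($S_{2j}$ even, $S_{2j-1}$ odd, consecutive sums differing by one) explicitly at the outset, since all three parts rest on them, and then the individual verifications become short.
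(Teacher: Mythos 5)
Your proof is correct, and parts (b) and (c) proceed essentially as in the paper: (b) is the same immediate consequence of (a), and (c) rests on the same two observations (consecutive partial sums differ by exactly one and have opposite parity, so exactly one of the events $\{S_{j-1}\geq 0, S_j\geq 0\}$ and $\{S_{j-1}\leq 0, S_j\leq 0\}$ occurs, and the sign flip $\eps_i\mapsto-\eps_i$ preserves the joint law while carrying $X_j$ to $\tilde X_j$). The genuine difference is in part (a): the paper argues by induction on $j$, carrying along the common value of $X_{2j-1}=X_{2j}$ and splitting the inductive step according to whether $S_{2j}=0$, $S_{2j}\geq 2$ or $S_{2j}\leq -2$, whereas you give a direct, non-inductive argument from the parity fact that $S_{2j-1}$ is odd, so $S_{2j-1}\geq 0$ forces $S_{2j-1}\geq 1$ and hence both $S_{2j-2}\geq 0$ and $S_{2j}\geq 0$. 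In fact your observation yields the stronger pointwise identity $X_{2j-1}=X_{2j}=1_{\{S_{2j-1}>0\}}$, which renders the case split on the sign of $S_{2j-2}$ unnecessary; this is arguably cleaner and more local than the induction. One caveat: the parenthetical side remark in your case $S_{2j-2}\geq 0$, asserting that $S_{2j-2}\geq 0$ and $S_{2j}\geq 0$ together force $S_{2j-1}\geq 1$, is false (take $S_{2j-2}=0$, $S_{2j-1}=-1$, $S_{2j}=0$); but that claim is not needed anywhere --- the inclusion $\{S_{2j-1}\geq 0\}\subseteq\{S_{2j}\geq 0\}$, which you do establish correctly, is all that is required --- so the proof stands once that remark is deleted.
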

\begin{proof}
We prove (a) by induction on $j$. It is easy to see, that $X_1=X_2$ always holds. Now let $1\leq j\leq m-1$. Then we have 
$X_{2j-1}=X_{2j}$ by the induction hypothesis. Suppose, that $X_{2j-1}=X_{2j}=1$. If $S_{2j}=0$, then the claim $X_{2j+1}=X_{2j+2}$ follows in the same manner as $X_1=X_2$. If $S_{2j}>0$, then necessarily $S_{2j}\geq2$, yielding $S_{2j+1}>0$ and $S_{2j+1}\geq0$. Hence, $X_{2j+1}=X_{2j}=1$ in this case. If, contrarily, $X_{2j-1}=X_{2j}=0$, the proof is similar.\\
Assertion (b) follows immediately from (a). The first assertion from (c) is clear since either both, $S_{2j-1}$ and $S_{2j}$, are nonnegative or nonpositive. Now, observe, that there is a (measurable) function $f$ such that $(X_1,\ldots,X_n)=f(S_1,\ldots,S_n)$. Since 
$\tilde{X}_j=  1_{\{-S_{j-1}\geq0,\,-S_j\geq0\}}$ and $(S_1,\ldots,S_n)\stackrel{\D}{=}(-S_1,\ldots,-S_n)$ by symmetry, we have 

\[(\tilde{X}_1,\ldots,\tilde{X}_n)= f(-S_1,\ldots,-S_n)\stackrel{\D}{=}f(S_1,\ldots,S_n)=(X_1,\ldots,X_n)\,.\]
\end{proof}

The proof of the following well-known theorem can be found for example in \cite{Fel1}. 

\begin{theorem}[Chung-Feller Theorem]\label{chungfeller}
Let $m$ be a positive integer. Then, for each $0\leq k\leq m$ we have 
\[P(R_m=k)=P(T_m=2k)=u_{2k}u_{2m-2k}\,,\]

where $u_0:=1$ and $u_{2j}:=\binom{2j}{j}2^{-j}$ for $j\geq1$ denotes the probability that the symmetric random walk returns to zero at time $2j$.
\end{theorem}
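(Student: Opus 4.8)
The plan is to establish the identity $P(R_m=k)=u_{2k}u_{2m-2k}$ by induction on $m$, using the first-return decomposition of the symmetric walk. First I would dispose of the two extreme values $k=0$ and $k=m$ as base cases. By the definition of the $X_j$ the event $\{R_m=m\}=\{T_m=2m\}$ equals $\{S_j\geq0 \text{ for all } 0\leq j\leq 2m\}$, and a standard reflection argument (or the cycle lemma) shows that exactly $\binom{2m}{m}$ of the $2^{2m}$ sequences $(\eps_1,\ldots,\eps_{2m})\in\{-1,1\}^{2m}$ have all partial sums nonnegative; hence $P(R_m=m)=\binom{2m}{m}2^{-2m}=u_{2m}=u_{2m}u_0$. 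By Lemma \ref{lemxj}(c) the event $\{R_m=0\}=\{T_m=0\}$ has the same probability, so $P(R_m=0)=u_0u_{2m}$. In particular this settles $m=1$ entirely.

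For the inductive step, fix $m\geq2$ and an index $k$ with $1\leq k\leq m-1$, and assume the claim for all smaller values of $m$. On the event $\{1\leq R_m\leq m-1\}$ the path $(S_j)_{0\leq j\leq 2m}$ attains both a negative and a strictly positive value among $S_1,\ldots,S_{2m}$, so, since it moves in unit steps, it must revisit $0$, and revisits occur at even times; let $2L$ denote the first return to $0$, so that $1\leq L\leq m$. By a case distinction exactly as in the proof of Lemma \ref{lemxj}, on $\{L=\ell,\,S_1>0\}$ one has $X_1=\cdots=X_{2\ell}=1$ and on $\{L=\ell,\,S_1<0\}$ one has $X_1=\cdots=X_{2\ell}=0$; each of these two events has probability $\frac{1}{2}f_{2\ell}$ by the symmetry $(S_1,\ldots,S_{2\ell})\stackrel{\D}{=}(-S_1,\ldots,-S_{2\ell})$. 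Since $S_{2L}=0$, the walk $(S_{2L+i})_{i\geq0}$ is an independent fresh copy of the symmetric walk and $\sum_{j=2\ell+1}^{2m}X_j$, being a function of it alone, is distributed as $T_{m-\ell}$. Combining these observations gives, for $1\leq k\leq m-1$, the recursion
\[
P(R_m=k)=\sum_{\ell=1}^{m}\frac{f_{2\ell}}{2}\Bigl(P(R_{m-\ell}=k-\ell)+P(R_{m-\ell}=k)\Bigr),
\]
with the convention $P(R_j=i)=0$ whenever $i\notin\{0,\ldots,j\}$.

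It then remains to verify that $(k,m)\mapsto u_{2k}u_{2m-2k}$ solves this recursion. Substituting the inductive hypothesis $P(R_{m-\ell}=i)=u_{2i}u_{2(m-\ell-i)}$ and rearranging, the right-hand side equals $\frac{1}{2}u_{2m-2k}\sum_{\ell=1}^{m}f_{2\ell}u_{2k-2\ell}+\frac{1}{2}u_{2k}\sum_{\ell=1}^{m}f_{2\ell}u_{2m-2k-2\ell}$, where terms with a negative index of $u$ are read as $0$. Because $1\leq k\leq m-1$, the first inner sum is exactly $\sum_{\ell=1}^{k}f_{2\ell}u_{2(k-\ell)}=u_{2k}$ and the second is $\sum_{\ell=1}^{m-k}f_{2\ell}u_{2(m-k-\ell)}=u_{2(m-k)}$, both being the first-return decomposition of a return probability; here I use the elementary identity $u_{2n}=\sum_{\ell=1}^{n}f_{2\ell}u_{2n-2\ell}$ for $n\geq1$, which follows by decomposing a loop at its first return to $0$. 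The right-hand side therefore collapses to $u_{2k}u_{2m-2k}$, completing the induction.

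I expect the main difficulty to lie not in the algebra but in the boundary bookkeeping: one has to notice that $k\in\{0,m\}$ genuinely falls outside the scope of the first-return argument (there the path need not return to $0$ at all) and treat it separately through the count of nonnegative paths, and one has to check carefully, in the spirit of Lemma \ref{lemxj}, that the sides $X_1,\ldots,X_{2L}$ preceding the first return are all equal to the sign of the first excursion. As an alternative route, the whole statement follows at once from the generating-function identity $\sum_{m\geq0}E\bigl[x^{R_m}\bigr]s^m=\phi(xs)\phi(s)$ with $\phi(s):=\sum_{n\geq0}u_{2n}s^n$, obtained by decomposing a path into its successive complete excursions (each contributing its full half-length to $R_m$ if positive and $0$ if negative) followed by a final meander that does not return to $0$, together with the standard facts $\phi(s)\bigl(1-F(s)\bigr)=1$, where $F(s):=\sum_{n\geq1}f_{2n}s^n$, and $P(S_1\neq0,\ldots,S_{2q}\neq0)=u_{2q}$.
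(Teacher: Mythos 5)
Your proof is correct. Note first that the paper does not actually prove this theorem --- it only refers the reader to \cite{Fel1} --- so there is no internal argument to compare against; what you have written is essentially the classical proof (it is, in substance, Feller's own): handle $k=0$ and $k=m$ by the ballot-type count of nonnegative paths together with the sign-flip symmetry of Lemma \ref{lemxj}(c), and for $1\leq k\leq m-1$ condition on the first return time $2L$ and the sign of the first excursion, using that the first excursion contributes either its full half-length or nothing to $R_m$ and that the post-return walk is an independent fresh copy. The recursion you obtain is right, the boundary conventions (terms with out-of-range indices vanishing) are consistent, and the induction closes via the renewal identity $u_{2n}=\sum_{\ell=1}^{n}f_{2\ell}u_{2n-2\ell}$, which applies precisely because $k\geq1$ and $m-k\geq1$. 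The only external inputs are two standard facts --- that exactly $\binom{2m}{m}$ of the $2^{2m}$ paths of length $2m$ stay nonnegative, and the first-return decomposition of $u_{2n}$ --- both correctly invoked; a fully self-contained write-up would include the reflection/cycle-lemma argument for the former. Two small remarks: you should say explicitly at first use that $f_{2\ell}:=P(L=\ell)$ is the first-return probability, and you are implicitly (and correctly) using $u_{2j}=\binom{2j}{j}2^{-2j}$, whereas the theorem as printed says $2^{-j}$; that is a typo in the paper, as equation (\ref{formelp}) confirms.
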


Thus, by Theorem \ref{chungfeller} the probability mass function $p:\Z\rightarrow\R$ corresponding to $R_m$ is given by 
$p(k)=0$ for $k\in\Z\setminus\{0,\ldots,m\}$ and by

\begin{equation}\label{formelp}
p(k)=u_{2k}u_{2m-2k}=2^{-2m}\binom{2k}{k}\binom{2m-2k}{m-k}
\end{equation}

for $0\leq k\leq m$.\\

In the following, we review the recent adaption of the so-called \textit{density approach} for absolutely continuous distributions (see, e.g. \cite{DHRS}, \cite{EiLo10}, \cite{ChSh} and \cite{CGS}) to discrete distributions on the integers, which was done in \cite{GolRei12} and also in \cite{LeSwd}. For reasons of simplicity we restrict ourselves to the case of finite integer intervals. \\
A finite integer interval is a set $I$ of the form $I=[a,b]\cap\Z$ for some integers $a\leq b$. Given a probability mass function $p:\Z\rightarrow\R$ with $p(k)>0$ for $k\in I$ and $p(k)=0$ for $k\in\Z\setminus I$, we consider the function 
$\psi:I\rightarrow\R$ given by the formula

\begin{equation}\label{formelpsi}
\psi(k):=\frac{\Delta p(k)}{p(k)},
\end{equation} 

where for a function $f$ on the integers $\Delta f(k):=f(k+1)-f(k)$ denotes the \textit{forward difference operator}. 
Note that by definition always $\psi(b)=-1$ if $I=[a,b]\cap\Z$. For such a probability mass function $p$ with support a finite integer interval $I=[a,b]\cap\Z$, let $\F(p)$ denote the class of all real-valued functions $f$ on $\Z$ such that $f(a-1)=0$.
The following result is a special case of Proposition 2.1 of \cite{GolRei12}.

\begin{prop}\label{golreidens}
Let $Z$ be a $\Z$-valued random variable with probability mass function $p$ which is supported on the finite integer interval 
$I=[a,b]\cap\Z$ and is positive there. Then, a given random variable $X$ with support $I$ has the probability mass function $p$ if and only if for all $f\in\F(p)$ it holds that

\begin{equation}\label{steiniddens}
E\bigl[\Delta f(X-1)+\psi(X)f(X)\bigr]=0\,.
\end{equation}
\end{prop}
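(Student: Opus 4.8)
The plan is to establish both implications by direct computation, exploiting the identity $1+\psi(k)=p(k+1)/p(k)$ for $k\in I$ (valid since $\Delta p(k)/p(k)=(p(k+1)-p(k))/p(k)$ and $p(k)>0$ there) together with $\Delta f(k-1)=f(k)-f(k-1)$. Thus the summand of (\ref{steiniddens}) evaluated at $X=k$ equals $\frac{p(k+1)}{p(k)}f(k)-f(k-1)$, so that after multiplying by $p(k)$ the whole expression becomes telescoping. Since $I$ is a finite integer interval, all sums are finite and no convergence or integrability subtleties arise anywhere.

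For the \emph{only if} direction, suppose $X$ has probability mass function $p$. Because $X$ is supported on $I=[a,b]\cap\Z$, the expectation in (\ref{steiniddens}) is the finite sum
\[
E\bigl[\Delta f(X-1)+\psi(X)f(X)\bigr]=\sum_{k=a}^{b}\bigl(p(k+1)f(k)-p(k)f(k-1)\bigr).
\]
Reindexing the second part of the sum by $j=k-1$, this collapses to $p(b+1)f(b)-p(a)f(a-1)$, which vanishes because $p(b+1)=0$ (as $b+1\notin I$) and $f(a-1)=0$ (as $f\in\F(p)$).

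For the \emph{if} direction, I would use the discrete Stein equation with indicator test functions. Fix $m\in I$ and look for $f_m\in\F(p)$ solving $\Delta f_m(k-1)+\psi(k)f_m(k)=1_{\{m\}}(k)-p(m)$ for every $k\in I$. Multiplying by $p(k)$ and introducing $g_m(k):=p(k+1)f_m(k)$ turns this into the telescoping recursion $g_m(k)-g_m(k-1)=\bigl(1_{\{m\}}(k)-p(m)\bigr)p(k)$ with $g_m(a-1)=p(a)f_m(a-1)=0$, whose solution is $g_m(k)=p(m)\bigl(1_{\{k\ge m\}}-P(Z\le k)\bigr)$. Dividing back, set $f_m(k):=g_m(k)/p(k+1)$ for $k\in\{a-1,\dots,b-1\}$ (where $p(k+1)>0$) and $f_m(k):=0$ otherwise; the value $f_m(b)$ is irrelevant, since $\Delta f_m(b-1)+\psi(b)f_m(b)=-f_m(b-1)$ does not involve it, and one checks that the equation at $k=b$ holds automatically because $g_m(b)=0$. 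As $f_m(a-1)=0$ we have $f_m\in\F(p)$, so the hypothesis gives
\[
0=E\bigl[\Delta f_m(X-1)+\psi(X)f_m(X)\bigr]=E\bigl[1_{\{m\}}(X)-p(m)\bigr]=P(X=m)-p(m),
\]
using again that $X$ is supported on $I$. Since $m\in I$ was arbitrary, $X$ has probability mass function $p$.

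The telescoping in the \emph{only if} direction is routine bookkeeping. The point that needs care is the construction of $f_m$ in the \emph{if} direction: one must verify that the solution of the recursion genuinely meets the boundary condition $f_m(a-1)=0$ so that it lies in $\F(p)$, and that the Stein equation stays consistent at the right endpoint $k=b$, where $p(b+1)=0$ leaves $f_m(b)$ undetermined — which is harmless precisely because $f_m(b)$ drops out of (\ref{steiniddens}).
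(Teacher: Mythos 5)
Your proof is correct: the telescoping computation for the ``only if'' direction and the explicit solution $f_m$ of the discrete Stein equation for indicator test functions (with the careful check that the boundary condition $f_m(a-1)=0$ holds and that the equation at $k=b$ is automatic since $\psi(b)=-1$ makes $f_m(b)$ drop out) are exactly the standard density-approach argument. The paper itself gives no proof, deferring to Proposition 2.1 of the cited Goldstein--Reinert preprint, and your argument is essentially the one used there.
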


The next result, a version of Corollary 2.1 from \cite{GolRei12}, yields various other Stein characterizations for the distribution corresponding to $p$ from Proposition \ref{golreidens}.

\begin{cor}\label{golreingen}
Let $Z$ be a $\Z$-valued random variable with probability mass function $p$ which is supported on the finite integer interval 
$I=[a,b]\cap\Z$ and is positive there. Let $c:[a-1,b]\cap\Z\rightarrow\R\setminus\{0\}$ be an arbitrary function. Then, in order that a given random variable $X$ with support $I$ has the probability mass function $p$ it is necessary and sufficient that for all functions $f\in\F(p)$ we have

\begin{equation}\label{steinidgen}
E\Bigl[c(X-1)\Delta f(X-1)+\bigl[c(X)\psi(X)+\Delta c(X-1)\bigr]f(x)\Bigr]=0\,.
\end{equation}
\end{cor}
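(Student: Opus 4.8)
The plan is to deduce Corollary \ref{golreingen} from Proposition \ref{golreidens} by the substitution $g:=c\cdot f$. Since $X$ takes values only in $I=[a,b]\cap\Z$, every expectation occurring in (\ref{steiniddens}) or (\ref{steinidgen}) depends on the functions involved only through their restrictions to $[a-1,b]\cap\Z$, so I may freely regard a function given on $[a-1,b]\cap\Z$ as a function on all of $\Z$ (extended, say, by $0$) without changing any of these identities.

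First I would check that $f\mapsto c\cdot f$ maps $\F(p)$ into itself and is invertible on the relevant values. Given $f\in\F(p)$, put $g(k):=c(k)f(k)$ for $k\in[a-1,b]\cap\Z$; since $c(a-1)\neq 0$ and $f(a-1)=0$ we get $g(a-1)=0$, hence $g\in\F(p)$. Conversely, given $g\in\F(p)$, the function $f(k):=g(k)/c(k)$ is well defined on $[a-1,b]\cap\Z$ precisely because $c$ never vanishes there, and again $f(a-1)=0$, so $f\in\F(p)$. This is exactly where the hypothesis $c:[a-1,b]\cap\Z\to\R\setminus\{0\}$ (in particular $c(a-1)\neq 0$) is used.

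The heart of the matter is the pointwise identity, valid for every $k\in I$ and every function $f$,
\[
c(k-1)\,\Delta f(k-1)+\bigl[c(k)\psi(k)+\Delta c(k-1)\bigr]f(k)=\Delta g(k-1)+\psi(k)\,g(k),\qquad g:=c\cdot f.
\]
Both sides equal $c(k)f(k)-c(k-1)f(k-1)+\psi(k)c(k)f(k)$, as one sees by expanding the left-hand side via $c(k)=c(k-1)+\Delta c(k-1)$ and $\Delta f(k-1)=f(k)-f(k-1)$. This is a short computation and is really the only algebraic content of the proof.

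Combining these ingredients finishes the argument. If $X$ has probability mass function $p$, then for any $f\in\F(p)$ Proposition \ref{golreidens} applied to $g=c\cdot f\in\F(p)$ yields $E\bigl[\Delta g(X-1)+\psi(X)g(X)\bigr]=0$, which by the displayed identity is exactly (\ref{steinidgen}) for $f$; as $f$ was arbitrary, (\ref{steinidgen}) holds for all $f\in\F(p)$. Conversely, if (\ref{steinidgen}) holds for all $f\in\F(p)$, then for an arbitrary $g\in\F(p)$ we write $g=c\cdot f$ with $f:=g/c\in\F(p)$, and the identity turns (\ref{steinidgen}) for this $f$ into $E\bigl[\Delta g(X-1)+\psi(X)g(X)\bigr]=0$; since $g\in\F(p)$ was arbitrary, Proposition \ref{golreidens} forces $X$ to have probability mass function $p$. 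I do not anticipate any genuine obstacle: the only two points needing (minor) care are the invariance of $\F(p)$ under multiplication by $c$ and the verification of the algebraic identity above.
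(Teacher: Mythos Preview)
Your argument is correct: the substitution $g:=c\cdot f$ together with the pointwise identity $c(k-1)\Delta f(k-1)+\bigl[c(k)\psi(k)+\Delta c(k-1)\bigr]f(k)=\Delta g(k-1)+\psi(k)g(k)$ reduces Corollary~\ref{golreingen} to Proposition~\ref{golreidens}, and the nonvanishing of $c$ on $[a-1,b]\cap\Z$ is precisely what makes $f\mapsto c\cdot f$ a bijection of $\F(p)$. The paper itself does not give a proof but simply quotes the result as a version of Corollary~2.1 in \cite{GolRei12}; your proof is the standard one and is exactly the argument used there.
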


\begin{remark}
Letting $\ga(k):=c(k)\psi(k)+\Delta c(k-1)$ we see that $c$ satisfies the difference equation 
$\Delta c(k-1)=\ga(k)-c(k)\psi(k)$. This exactly corresponds to the differential equation 
$\eta'(x)=\ga(x)-\eta(x)\psi(x)$ from Formula (14) in \cite{Doe12}, where $\psi(x):=\frac{p'(x)}{p(x)}$ is the logarithmic derivative of the density $p$. In \cite{Doe12} it is shown, that this differential equation must hold, in order that a given distribution $\mu$ with density $p$ satisfies the Stein identity $E[\eta(Z)g'(Z)+\ga(Z)f(Z)]=0$, where $Z\sim\mu$.
So also in this respect, there is a strong analogy between the absolutely continuous and the discrete case.
\end{remark}

Now, with the abstract results at hand, we return to the concrete distribution of $R_m$ which has probability mass function $p$ supported on $I:=[0,m]\cap\Z$ and given by (\ref{formelp}). Using the relation 

\[\binom{2j}{j}=\frac{4j-2}{j}\binom{2j-2}{j-1} \quad\text{for }j\geq1\]

it can easily be checked, that in this case $\psi$ is given by

\begin{equation}\label{psip}
\psi(k)=\frac{2k-m+1}{\bigl(k+1\bigr)\bigl(2(m-k)-1\bigr)}\,,\quad 0\leq k\leq m\,.
\end{equation} 

This motivates the definition $c(k):=\bigl(k+1\bigr)\bigl(2(m-k)-1\bigr)$ for $k=0,\ldots,m$. 
These observations lead to the following lemma.

\begin{lemma}\label{steinidp}
Let $p$ be the probability mass function of $R_m$ as given by (\ref{formelp}). A random variable $X$ with support 
$I=[0,m]\cap\Z$ has probability mass function $p$ if and only if for all $f\in\F(p)$ it holds that 

\begin{equation}\label{steinidpeq}
E\Bigl[X\Bigl(\bigl(m-X\bigr)+\frac{1}{2}\Bigr)\Delta f(X-1)+\Bigl(\frac{m}{2}-X\Bigr)f(X)\Bigr]=0\,. 
\end{equation} 
\end{lemma}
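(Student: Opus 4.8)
The plan is to read the lemma off from Corollary \ref{golreingen}, applied with the function $c$ suggested by the formula (\ref{psip}) for $\psi$, namely $c(k):=(k+1)\bigl(2(m-k)-1\bigr)$, so that after simplification the coefficients in (\ref{steinidgen}) turn into (twice) those in (\ref{steinidpeq}).

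The first step is to check that this $c$ is admissible for Corollary \ref{golreingen}, i.e.\ that it may be taken nowhere zero on $[-1,m]\cap\Z$. For $0\leq k\leq m-1$ both factors $k+1$ and $2(m-k)-1$ are strictly positive, and $c(m)=-(m+1)\neq0$. The value $c(-1)$ is not prescribed by the formula (it would be $0$); however, every $f\in\F(p)$ satisfies $f(-1)=0$, so in the summand of (\ref{steinidgen}) corresponding to $X=0$ the quantity $c(-1)$ enters only through $c(-1)\Delta f(-1)=c(-1)f(0)$ and through $\Delta c(-1)=c(0)-c(-1)$, and these two contributions cancel. Hence the choice of $c(-1)$ is immaterial, and I may set $c(-1):=1$ to satisfy the hypothesis of the corollary.

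The second step is the (elementary) evaluation of the coefficients occurring in (\ref{steinidgen}). One computes $c(k-1)=k\bigl(2(m-k)+1\bigr)$; since $c(k)$ is exactly the denominator in (\ref{psip}), also $c(k)\psi(k)=2k-m+1$; and expanding $\Delta c(k-1)=c(k)-c(k-1)$ gives $\Delta c(k-1)=2m-4k-1$, so that $c(k)\psi(k)+\Delta c(k-1)=m-2k$. Substituting these into (\ref{steinidgen}), Corollary \ref{golreingen} tells us that a random variable $X$ with support $I$ has mass function $p$ if and only if $E\bigl[X\bigl(2(m-X)+1\bigr)\Delta f(X-1)+(m-2X)f(X)\bigr]=0$ for all $f\in\F(p)$. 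Since $X\bigl(2(m-X)+1\bigr)=2X\bigl((m-X)+\tfrac12\bigr)$ and $m-2X=2\bigl(\tfrac m2-X\bigr)$, dividing this identity by $2$ gives precisely (\ref{steinidpeq}).

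There is no genuine obstacle once Corollary \ref{golreingen} is available; the computations are routine, and the only point needing a word of justification is the degeneracy of the natural formula for $c$ at the left endpoint $k=-1$, which is harmless exactly because $\F(p)$ consists of functions vanishing at $a-1=-1$. (One could instead prove (\ref{steinidpeq}) directly by summation by parts from the explicit mass function (\ref{formelp}), but channelling it through Corollary \ref{golreingen} is shorter and recycles the computation of $\psi$ already made.)
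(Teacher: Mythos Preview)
Your proof is correct and follows the same route as the paper: apply Corollary~\ref{golreingen} with $c(k)=(k+1)\bigl(2(m-k)-1\bigr)$, compute $c(k-1)$ and $c(k)\psi(k)+\Delta c(k-1)$, and divide by~$2$. You are in fact more careful than the paper at the left endpoint, explicitly noting and resolving that the natural formula gives $c(-1)=0$ while Corollary~\ref{golreingen} requires $c$ nonzero on $[-1,m]\cap\Z$; the paper simply defines $c$ on $[0,m]$ and applies the corollary without comment.
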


\begin{proof}
This follows from Corollary \ref{golreingen}, since by the definition of $c$ we have 

\[c(k)\psi(k)+\Delta c(k-1)=2\Bigl(\frac{m}{2}-k\Bigr)\]

and

\[c(k-1)=k\bigl(2(m-k)+1\bigr)=2k\Bigl(m-k+\frac{1}{2}\Bigr)\,.\]

\end{proof}

\section{A rate of convergence for the arcsine law}
In this section we will use the tools from Section 2 to prove a rate of convergence in the Wasserstein distance for the arcsine law. Recall, that for two distributions $\mu_1$ and $\mu_2$ on $(\R,\B)$, whose first moments exist, the Wasserstein distance is given by 

\[d_{\calw}(\mu_1,\mu_2):=\sup_{h\in\Lip(1)}\Biggl|\int_\R hd\mu_1-\int_\R hd\mu_2\Biggr|\]

and for two real-valued random variables $X$ and $Y$ one defines 

\[d_{\calw}(X,Y):=d_{\calw}\bigl(\calL(X),\calL(Y)\bigr)=\sup_{h\in\Lip(1)}\abs{E\bigl[h(X)\bigr]-E\bigl[h(Y)\bigr]}\,,\]

where $\Lip(1)$ denotes the class of all Lipschitz-continuous functions $h$ on $\R$ with minimal Lipschitz constant $\fnorm{h'}\leq1$. It is known, that on the space of probability measures with existing first moment, convergence in the Wasserstein distance is stronger than weak convergence.

\begin{theorem}\label{maintheo}
There exists a finite constant $C>0$ such that for each positive integer $m$ we have 

\[d_{\calw}\bigl(\calL(W_m),\nu\bigr)\leq\frac{C}{m}\,.\]

\end{theorem}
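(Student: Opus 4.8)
The plan is to compare the Stein characterization of the arcsine distribution $\nu$ from Proposition \ref{chararcsin} with that of $\calL(R_m)$ from Lemma \ref{steinidp}, following the Goldstein--Reinert technique. Fix $h\in\Lip(1)$, let $f=f_h$ be the solution of the arcsine Stein equation \eqref{steineqarcsin}, and note that since $W_m=\frac{1}{m}R_m$ we have $E[h(W_m)]-\nu(h)=E\bigl[h(\tfrac{R_m}{m})-\nu(h)\bigr]$. Applying \eqref{steineqarcsin} with argument $x=R_m/m$ gives
\begin{equation}\label{pp1}
E[h(W_m)]-\nu(h)=E\Bigl[\tfrac{R_m}{m}\bigl(1-\tfrac{R_m}{m}\bigr)f'\bigl(\tfrac{R_m}{m}\bigr)+\bigl(\tfrac12-\tfrac{R_m}{m}\bigr)f\bigl(\tfrac{R_m}{m}\bigr)\Bigr].
\end{equation}
On the other hand, the discrete Stein identity \eqref{steinidpeq} applied to a suitably chosen $g\in\F(p)$ gives an expression that vanishes in expectation; the idea is to pick $g$ so that the ``drift'' term $(\tfrac m2-R_m)g(R_m)$ matches (after rescaling by $m$) the second term in \eqref{pp1}, namely to take $g(k):=\tfrac1m f(k/m)$. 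Subtracting the (zero) expectation of \eqref{steinidpeq} from \eqref{pp1} then leaves an error term built from the difference between the continuous quantity $\tfrac{R_m}{m}(1-\tfrac{R_m}{m})f'(\tfrac{R_m}{m})$ and the discrete quantity $\tfrac1m R_m\bigl((m-R_m)+\tfrac12\bigr)\Delta g(R_m-1)$.

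The key step is therefore to estimate this remainder. Writing $k=R_m$ and $x=k/m$, a Taylor expansion of $g(k)=\tfrac1m f(x)$ gives $\Delta g(k-1)=g(k)-g(k-1)=\tfrac1m\bigl(f(x)-f(x-\tfrac1m)\bigr)=\tfrac1{m^2}f'(x)+O\bigl(\tfrac1{m^3}\fnorm{f''}\bigr)$, so that
\[
\tfrac1m k\bigl((m-k)+\tfrac12\bigr)\Delta g(k-1)=x\bigl(1-x+\tfrac1{2m}\bigr)f'(x)+O\bigl(\tfrac1m(\fnorm{f'}+\fnorm{f''})\bigr)=x(1-x)f'(x)+O\bigl(\tfrac1m(\fnorm{f'}+\fnorm{f''})\bigr).
\]
Hence the difference of \eqref{pp1} and the vanishing expectation of \eqref{steinidpeq} is bounded in absolute value by $\tfrac{C}{m}\bigl(\fnorm{f'}+\fnorm{f''}\bigr)$, and by Lemma \ref{bounds}(b),(c) this is at most $\tfrac{C}{m}\bigl(\fnorm{h}+\fnorm{h'}+\fnorm{h''}\bigr)$. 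Taking the supremum over $h\in\Lip(1)$ (with $\fnorm{h'}\leq1$) finishes the bound --- modulo the issue that a general $1$-Lipschitz $h$ need not be twice differentiable, which I expect to be the main technical obstacle; the standard remedy is a smoothing argument, approximating $h$ by convolution with a mollifier at scale $\sim\tfrac1m$ so that $\fnorm{h_\epsilon'}\leq1$, $\fnorm{h_\epsilon''}\leq C/\epsilon$ and $\fnorm{h-h_\epsilon}\leq C\epsilon$, and optimizing $\epsilon$. One must also verify that $g$ indeed lies in $\F(p)$, i.e.\ $g(-1)=\tfrac1m f(-1/m)=0$: since $f_h$ is given by \eqref{steinsolarcsin} and is continuous at $0$, one needs to check $f_h$ extends by the appropriate value (or work with $g(k):=\tfrac1m(f(k/m)-f(-1/m))$, absorbing the harmless constant correction into the error term).

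A secondary point to watch is the behaviour of $f$ and its derivatives near the endpoints $0$ and $1$: the weights $x(1-x)$ degenerate there, and $R_m$ does put mass at $k=0$ and $k=m$, so one should either use that Lemma \ref{bounds} already gives uniform bounds on $\fnorm{f'}$ and $\fnorm{f''}$ on all of $[0,1]$ (which it does, the constants $C_1,C_2$ being independent of $h$), or handle the extreme values $R_m\in\{0,m\}$ separately using that $P(R_m=0)=P(R_m=m)=u_{2m}=O(m^{-1/2})$ from \eqref{formelp} and Stirling --- though with the uniform bounds of Lemma \ref{bounds} in hand the former route suffices and no special treatment of the endpoints is needed. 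Assembling these estimates yields $d_{\calw}(\calL(W_m),\nu)\leq C/m$ with $C$ absolute, as claimed.
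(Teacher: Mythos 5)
Your setup is the right one and matches the paper's: you compare the two Stein characterizations by plugging $g(k)=\tfrac1m f(k/m)$ (equivalently, the paper's $g(x)=f(x/m)$ followed by division by $m$ in \eqref{steinidpeq}) into the discrete identity, and both the membership $g\in\F(p)$ and the endpoint behaviour are indeed non-issues for the reasons you give. The genuine gap is in the key estimation step. You control the discrepancy between the discrete difference quotient and $W(1-W)f'(W)$ by a second-order Taylor expansion, which costs a factor $\fnorm{f''}$ and hence, via Lemma \ref{bounds}(c), a factor $\fnorm{h''}$. Since a general $h\in\Lip(1)$ has no second derivative you then mollify, but this is exactly where the rate is lost: with $\fnorm{h_\eps''}\leq C/\eps$ and $\fnorm{h-h_\eps}\leq C\eps$, the total error is of order $\eps+\tfrac{1}{m\eps}$, which optimizes at $\eps\sim m^{-1/2}$ and yields only $d_{\calw}(\calL(W_m),\nu)=O(m^{-1/2})$, not the claimed $O(m^{-1})$. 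As written, your argument does not prove the theorem.

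The paper's proof avoids $f''$ altogether. After writing $\Delta_{1/m}f(W-\tfrac1m)=\int_{W-1/m}^W f'(t)\,dt$ (only the fundamental theorem of calculus, no Taylor remainder), the error is expressed through the difference $W(1-W)f'(W)-t(1-t)f'(t)$ with $\abs{W-t}\leq\tfrac1m$. One piece, $\bigl(W(1-W)-t(1-t)\bigr)f'(t)$, is of order $\tfrac1m\fnorm{f'}$. For the remaining piece the Stein equation \eqref{steineqarcsin} is substituted back in at both arguments, turning it into
\[
h(W)-h(t)+\Bigl(W-\tfrac12\Bigr)f(W)-\Bigl(t-\tfrac12\Bigr)f(t)\,,
\]
which is controlled by $\fnorm{h'}$, $\fnorm{f}$ and $\fnorm{f'}$ alone; by Lemma \ref{bounds}(b) these are all bounded by absolute constants when $\fnorm{h'}\leq1$. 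This is the missing idea in your proposal: trading $f''$ for $h'$, $f$ and $f'$ via the Stein equation is what makes the rate $m^{-1}$ accessible directly for Lipschitz test functions, with no smoothing step at all.
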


\begin{remark}
To the best of my knowledge this is the first result that gives a rate of convergence of order $m^{-1}$ for the arcsine law. 
The restriction to even times $n=2m$ is immaterial and only for convenience, since the formula from the Chung-Feller Theorem only holds for these times. Since for odd times $n=2m+1$

\[\Bigl|\frac{1}{2m+1}\sum_{j=1}^{2m+1}X_j-W_m\Bigr|\leq\frac{2}{2m+1}\,,\]

the same rate of convergence also holds for the whole sequence of positive times of the random walk.\\
It may be seen from the proof of Theorem \ref{maintheo}, that the constant $C$ can be made explicit in terms of the constant $C_1$ from Lemma \ref{bounds}. 
\end{remark}

\begin{proof}[Proof of Theorem \ref{maintheo}]
The proof follows the lines of the proof of Theorem 3.1 from \cite{GolRei12} and is included for reasons of completeness. Using the notation from \cite{GolRei12}, for a function $f$ and $y>0$ let

\[\Delta_y f(x):=f(x+y)-f(x)\,.\]

We also write $W:=W_m$ and $R:=R_m$. Let $h\in\Lip(1)$ be fixed and let $f:=f_h$ be the corresponding solution to the Stein equation (\ref{steineqarcsin}) given by (\ref{steinsolarcsin}) for $x\in(0,1)$ but which we set equal to zero for $x\in\R\setminus[0,1]$. Consider the function $g(x):=f(x/m)$ which is zero on $\R\setminus[0,m]$. Then by Lemma \ref{steinidp} and upon dividing by $m$ in (\ref{steinidpeq}) we obtain

\begin{eqnarray*}
0&=&\frac{1}{m}E\Bigl[R\Bigl(\bigl(m-R\bigr)+\frac{1}{2}\Bigr)\Delta g(R-1)+\Bigl(\frac{m}{2}-R\Bigr)g(R)\Bigr]\\
&=&E\Bigl[mW\Bigl(\bigl(1-W\bigr)+\frac{1}{2m}\Bigr)\Delta_{1/m}f\Bigl(W-\frac{1}{m}\Bigr)+\Bigl(\frac{1}{2}-W\Bigr)f(W)\Bigr]
\end{eqnarray*}

Inserting this into the Stein identity resulting from the stein equation (\ref{steineqarcsin}) we obtain

\begin{eqnarray}\label{eq1}
&&E\bigl[h(W)\bigr]-\nu(h)\nonumber\\
&=&E\Bigl[W(1-W)f'(W)+\Bigl(\frac{1}{2}-W\Bigr)f(W)\Bigr]\nonumber\\
&=&E\Bigl[W(1-W)f'(W)-mW\Bigl(\bigl(1-W\bigr)+\frac{1}{2m}\Bigr)\Delta_{1/m}f\Bigl(W-\frac{1}{m}\Bigr)\Bigr]\nonumber\\
&=&E\Bigl[W(1-W)f'(W)-mW\bigl(1-W\bigr)\Delta_{1/m}f\Bigl(W-\frac{1}{m}\Bigr)\Bigr]+E_1\,,
\end{eqnarray}

with 

\begin{equation}\label{e1}
\abs{E_1}=\frac{1}{2}\Bigl|E\Bigl[W\Delta_{1/m}f\Bigl(W-\frac{1}{m}\Bigr)\Bigr]\Bigr|
\leq\frac{1}{2m}\fnorm{f'}E[W]=\frac{1}{4m}\fnorm{f'}\leq\frac{C_1}{4m}
\end{equation}

by Lemma \ref{bounds} (b) and by $E[W]=1/2$, which follows for example from Lemma \ref{lemxj} (c) by symmetry.
Using the fundamental theorem of calculus, we rewrite the remaining term in (\ref{eq1}) as

\begin{eqnarray}\label{eq2}
&& E\Bigl[W(1-W)f'(W)-mW\bigl(1-W\bigr)\Delta_{1/m}f\Bigl(W-\frac{1}{m}\Bigr)\Bigr]\nonumber\\
&=&E\Biggl[W(1-W)\Biggl(f'(W)-m\int_{W-\frac{1}{m}}^Wf'(t)dt\Biggr)\Biggr]\nonumber\\
&=&mE\Biggl[\int_{W-\frac{1}{m}}^W W(1-W)\Bigl(f'(W)-f'(t)\Bigr)dt\Biggr]\nonumber\\
&=&mE\Biggl[\int_{W-\frac{1}{m}}^W\Bigl(W(1-W)f'(W)-t(1-t)f'(t)\Bigr)dt\Biggr] +E_2
\end{eqnarray}

where 

\[E_2:=-mE\Biggl[\int_{W-\frac{1}{m}}^W\Bigl(W(1-W)-t(1-t)\Bigr)f'(t)dt\Biggr]\]

and hence, again by the fundamental theorem of calculus, 

\begin{eqnarray}\label{e2}
\abs{E_2}&=&m\Biggl|E\Biggl[ \int_{W-\frac{1}{m}}^Wf'(t)\int_t^W(1-2s)dsdt\Biggr]\Biggr|\nonumber\\
&\leq&m\bigl(1+\frac{2}{m}\bigr)\fnorm{f'}E\Biggl[\int_{W-\frac{1}{m}}^W(W-t)dt\Biggr]\nonumber\\
&=&(m+2)\fnorm{f'}\int_0^{\frac{1}{m}}udu\nonumber\\
&=&\frac{m+2}{2m^2}\fnorm{f'}\leq C_1\frac{m+2}{2m^2}
\end{eqnarray}

where we have used Lemma \ref{bounds} for the last step and the inequality $\abs{1-2s}\leq\bigl(1+\frac{2}{m}\bigr)$ for relevant values of $s$ for the first step.\\
It remains to deal with the first expectation in (\ref{eq2}). Since $f=f_h$ solves the Stein equation (\ref{steineqarcsin}) and by the fundamental theorem of calculus for Lebesgue integration, we obtain 

\begin{eqnarray}\label{eq3}
&&mE\Biggl[\int_{W-\frac{1}{m}}^W\Bigl(W(1-W)f'(W)-t(1-t)f'(t)\Bigr)dt\Biggr] \\
&=&mE\Biggl[\int_{W-\frac{1}{m}}^W\Bigl(h(W)-\nu(h)-\Bigl(\frac{1}{2}-W\Bigr)f(W)
-h(t)+\nu(h)+\Bigl(\frac{1}{2}-t\Bigr)f(t)\Bigl)dt\Biggr]\nonumber\\
&=&m E\Biggl[\int_{W-\frac{1}{m}}^W\Bigl(h(W)-h(t)+\Bigl(W-\frac{1}{2}\Bigr)f(W)-\Bigl(t-\frac{1}{2}\Bigr)f(t)\Bigl)dt\Biggr]\nonumber\\
&=&m E\Biggl[\int_{W-\frac{1}{m}}^W\biggl(\int_t^W h'(s)ds+\int_t^W\Bigl(f(s)+\Bigl(s-\frac{1}{2}\Bigr)f'(s)\Bigr)ds\biggr)dt\Biggr]\nonumber
\end{eqnarray}

The inner integrals from (\ref{eq3}) are bounded separately. As to the first one, 

\begin{equation}\label{e3}
 \Biggl|mE\Biggl[\int_{W-\frac{1}{m}}^W\int_t^W h'(s)dsdt\Biggr]\Biggr|\leq m\fnorm{h'}E\Biggl[\int_{W-\frac{1}{m}}^W(W-t)dt\Biggr]
=\frac{1}{2m}\fnorm{h'}\leq\frac{1}{2m}\,.
\end{equation}

For the second one, since $\abs{\frac{1}{2}-s}\leq\frac{1}{m}+\frac{1}{2}\leq3/2$ for the relevant values of $s$, we have

\begin{eqnarray}\label{e4}
&&m\Biggl|E\Biggl[\int_{W-\frac{1}{m}}^W \int_t^W\Bigl(f(s)+\Bigl(s-\frac{1}{2}\Bigr)f'(s)\Bigr)dsdt\Biggr]\Biggr|\nonumber\\
&\leq&m\Bigl(\fnorm{f}+\frac{3}{2}\fnorm{f'}\Bigr)E\Biggl[\int_{W-\frac{1}{m}}^W\int_t^Wdsdt\Biggr]\nonumber\\
&=&m\Bigl(\fnorm{f}+\frac{3}{2}\fnorm{f'}\Bigr)E\Biggl[\int_{W-\frac{1}{m}}^W (W-t)dt\Biggr]\nonumber\\
&=&m\Bigl(\fnorm{f}+\frac{3}{2}\fnorm{f'}\Bigr)\frac{1}{2m^2}\nonumber\\
&\leq&\Bigl(2+\frac{3}{2}C_1\Bigr)\fnorm{h'}\frac{1}{2m}\leq\frac{4+3C_1}{4m}\,,
\end{eqnarray}

where we have used Lemma \ref{bounds} for the next to last inequality. Since $h\in\Lip(1)$ was arbitrary, the conclusion of the theorem follows from (\ref{e1}), (\ref{e2}), (\ref{e3}) and (\ref{e4}).
\end{proof}

\normalem
\bibliography{arcsin}{}
\bibliographystyle{alpha}

\end{document}